\numberwithin{equation}{section}
\newtheorem{theorem}{\bf Theorem}
\newtheorem{remark}{\bf Remark}[section]
\theoremstyle{definition}
\theoremstyle{remark}
\numberwithin{equation}{section}
\newcommand{\Hi}{\mathbb{H}}
\newcommand{\Sf}{\mathbb{S}}
\newcommand{\R}{\mathbb{R}}
\newcommand{\Q}{\mathbb{Q}^{2}}
\begin{document}

\title{Isoparametric hypersurfaces in product spaces}

\author{João Batista Marques dos Santos}
\address{João Batista Marques dos Santos - Departamento de Matem\'atica, Universidade de Bras\'ilia, 70910-900, Bras\'ilia-DF, Brazil}
 \email{j.b.m.santos@mat.unb.br}
\thanks{The first author was supported by Capes and CNPq. }

\author{Jo\~ao Paulo dos Santos}
\address{Jo\~ao Paulo dos Santos - Departamento de Matem\'atica, Universidade de
Bras\'ilia, 70910-900, Bras\'ilia-DF, Brazil}
\email{joaopsantos@unb.br}
\thanks{The second author was supported by CNPq grant number 315614/2021-8.}

\subjclass[2020]{53C40, 53C42}

\keywords{isoparametric hypersurfaces, product spaces, parallel hypersurfaces}

\begin{abstract}
In this paper, we characterize and classify the isoparametric hypersurfaces with constant principal curvatures in the product spaces $ \Q_{c_{1}} \times \Q_{c_{2}}$, where $\Q_{c_{i}}$ is a space form with constant sectional curvature $c_{i}$, for $c_1 \neq c_2$. 
\end{abstract}

\maketitle

\section{Introduction}

A hypersurface $M^n$ of a Riemannian manifold $\widetilde{M}^{n+1}$ is said to be isoparametric if it has constant mean curvature as well as its nearby equidistant hypersurfaces (i.e., the correspondent mean curvatures depend only on the distance to $M$). Equivalently, we say that $M$ is isoparametric if it is the level set of some isoparametric function defined on $\widetilde{M}$. Following M. Domínguez-Vázquez \cite{notas-miguel}, the first notion of isoparametric function appeared in 1919 in the work of the Italian mathematician C. Somigliana \cite{somigliana1918sulle}, which deals with of the relations between the Huygens principle and geometric optics. This study represented the beginning of an important research line in Differential Geometry, namely the isoparametric hypersurfaces studied by renowned mathematicians such as Beniamino Segre, Élie Cartan, and Tullio Levi-Civita.

When the ambient space is a space form, i.e., a simply connected complete Riemannian manifold with constant sectional curvature, the previous definition of isoparametric hypersurface is equivalent to saying that the hypersurface has constant principal curvatures (see \cite{isoCartan} and \cite{notas-miguel}). However, in arbitrary ambient spaces of nonconstant curvature, the equivalence between isoparametric hypersurfaces and hypersurfaces with constant principal curvatures may no longer be true. For instance, Q. M. Wang, in \cite{WangExIso}, found examples of isoparametric hypersurfaces in complex projective spaces that do not have constant principal curvatures. For more examples, we refer \cite{diaz2010inhomogeneous}, \cite{diaz2013isoparametric} and \cite{ge2015filtration}. Recently, A. Rodríguez-Vázquez, in \cite{RVazquezExIso}, found an example of a non-isoparametric hypersurface with constant principal curvatures. Another example was given by the authors, in a joint work with F. Guimarães \cite{isoparametric-mcf}.

In this work, we consider the Riemannian products of 2-dimensional space forms $ \Q_{c_{1}} \times \Q_{c_{2}}$, with constant sectional curvatures $c_{1}$ and $c_2$, respectively, with $c_1 \neq c_2$, where $c_i=1,\,0$ or $-1$, $i=1,\,2$. The particular case where $c_{1} = 1$ and $c_{2} = 0$, that is, when the ambient space is $\Sf^2 \times \R^2$, was considered by J. Julio-Batalla in \cite{s2r2Batalla} where he obtained a complete classification of isoparametric hypersurfaces with constant principal curvatures. Using some ideas developed by F. Urbano in \cite{s2s2Urbano}, where isoparametric hypersurfaces of $\mathbb{S}^2 \times \mathbb{S}^2$ were classified, J. Julio-Batalla showed that if $\Sigma$ is an isoparametric hypersurface in $\Sf^2 \times \R^2$, with constant principal curvatures and unit normal $N = N_{1} + N_{2}$, then $\lvert N_{1} \rvert$ and $\lvert N_{2} \rvert$ are constant. The classification continues by showing that $\lvert N_{1} \rvert = 1$ and $\lvert N_{2} \rvert = 0$ or $\lvert N_{1} \rvert = 0$ and $\lvert N_{2} \rvert = 1$. Thus, the hypersurface families obtained are $\Sf^2 \times \R$, $\Sf^2 \times \mathbb{S}^1(r)$ (for $r \in \R^{+}$), or $\mathbb{S}^1(t) \times \R^2$ (for $t \in (0,1]$).

In this paper, we extend and improve the results of \cite{s2r2Batalla} in the following sense. Considering the ambient space $\Q_{c_{1}} \times \Q_{c_{2}}$ with $c_{1} \neq c_{2}$, we prove

\begin{theorem}\label{theo1}
Let $\Sigma$ be an isoparametric hypersurface in $\Q_{c_{1}} \times \Q_{c_{2}}$, $c_{1} \neq c_{2}$, and  unit normal $N = N_{1} + N_{2}$. Then the principal curvatures of $\Sigma$ are constant if and only if $\lvert N_{1} \rvert$ and $\lvert N_{2} \rvert$ are constant.
\end{theorem}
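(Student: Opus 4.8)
The plan is to work with the natural decomposition of the ambient curvature operator on $\Q_{c_{1}} \times \Q_{c_{2}}$ and to exploit the fact that, along an isoparametric hypersurface, all the equidistant hypersurfaces have constant mean curvature. First I would set up notation: write the tangent projections $P$ onto each factor, so that the product structure is encoded by an endomorphism (the analogue of an almost product structure), and decompose the unit normal as $N = N_1 + N_2$ with $|N_1|^2 + |N_2|^2 = 1$. The key structural fact is that $f := |N_1|^2$ is a function on $\Sigma$ whose gradient and Hessian can be computed from the Gauss and Weingarten formulas; in particular $\nabla f$ is expressible in terms of the shape operator $S$ applied to the tangential part of the product structure acting on $N$, and one gets an identity of the schematic form $\nabla f = -2 S T$ where $T$ is the tangential component of (say) the first-factor projection of $N$.

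Second, I would use the hypothesis that $\Sigma$ is isoparametric. One direction is immediate: if $|N_1|$ and $|N_2|$ are constant, then the ambient curvature terms appearing in the derivative of the shape operator along the normal geodesic become constant, so the standard Riccati/Jacobi equation governing the shape operators of the parallel hypersurfaces $\Sigma_t$ has constant coefficients; solving it shows the eigenvalues of $S_t$ are constant in the directions tangent to $\Sigma$, i.e. the principal curvatures are constant. (Alternatively, one can argue that $\Sigma$ is then "curvature-adapted" in a way that forces the Jacobi operator and shape operator to commute with constant spectrum.) For the converse — the substantive direction — I would assume the principal curvatures are constant and aim to show $f = |N_1|^2$ is constant. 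The isoparametric condition gives that the mean curvatures $H_t$ of the parallel hypersurfaces depend only on $t$; expanding $H_t$ via the Riccati equation and using that $H = H_0$ is already constant, the first nontrivial coefficient in $t$ involves $\tr(S^2)$ plus an ambient Ricci-type term $\widetilde{\rc}(N,N)$. In $\Q_{c_{1}} \times \Q_{c_{2}}$ one computes $\widetilde{\rc}(N,N) = c_1 |N_1|^2 + c_2 |N_2|^2 = c_1 f + c_2(1-f)$, so constancy of $H_t$ in $t$ forces $c_1 f + c_2(1-f)$ to be constant on $\Sigma$; since $c_1 \neq c_2$ this immediately yields $f$ constant, hence $|N_1|$ and $|N_2|$ constant.

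The main obstacle I anticipate is making the second-derivative (order-$t^2$ or the relevant higher-order) expansion of the parallel mean curvature fully rigorous and identifying exactly which ambient curvature quantity is the obstruction. One has to be careful that the normal geodesics emanating from $\Sigma$ stay within the domain where the parallel hypersurfaces are embedded, and that the "constant principal curvatures" hypothesis is used correctly: constant principal curvatures of $\Sigma$ alone do not obviously control the principal curvatures of $\Sigma_t$ unless one also knows how the eigenvectors rotate, which is where the product structure $P$ and the quantity $SPN$ re-enter. I would therefore organize the converse as follows: (i) derive the evolution equation $S' = S^2 + \widetilde{R}_N$ along the normal geodesic, where $\widetilde{R}_N$ is the normal Jacobi operator; (ii) take traces and use $H_t' = \tr(S_t^2) + \widetilde{\rc}(N_t,N_t)$ together with the isoparametric hypothesis $H_t' = $ function of $t$ only; (iii) at $t=0$, since $H_0$ and (by the constant-principal-curvature hypothesis) $\tr(S_0^2)$ are constant on $\Sigma$, conclude $\widetilde{\rc}(N,N)$ is constant on $\Sigma$; (iv) compute $\widetilde{\rc}(N,N) = c_1|N_1|^2 + c_2|N_2|^2$ explicitly in the product and invoke $c_1 \neq c_2$. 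A secondary technical point is handling the degenerate cases $c_i = 0$ (flat factor) uniformly, but the Ricci computation $\widetilde{\rc}(N,N) = c_1 f + c_2(1-f)$ is valid regardless of the signs of the $c_i$, so no separate treatment is really needed.
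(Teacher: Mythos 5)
Your main (forward) direction is correct and is, in essence, the same argument as the paper's, just packaged more conceptually. The paper's key identity, obtained by differentiating $f(r)=\frac{d}{dr}(\det D(r))+3h(r)\det D(r)\equiv 0$ at $r=0$, is $\tr(A^2)=3h'(0)-(\delta_1+\delta_2)$ with $\delta_1+\delta_2=\frac{c_1(1+C)}{2}+\frac{c_2(1-C)}{2}=c_1\lvert N_1\rvert^2+c_2\lvert N_2\rvert^2=\widetilde{\rc}(N,N)$; this is exactly your traced Riccati equation $(3h)'(0)=\tr(A^2)+\widetilde{\rc}(N,N)$. The paper derives it by explicitly solving the Jacobi equation in a frame adapted to the product structure and expanding $\det D(r)$, whereas you read it off directly from the trace of $A'=A^2+\widetilde{R}_N$; your route is shorter and your Ricci computation (valid uniformly in the signs of the $c_i$) is correct. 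Given the isoparametric hypothesis ($h'(0)$ constant on $\Sigma$) and constant principal curvatures ($\tr(A^2)$ constant), constancy of $c_1\lvert N_1\rvert^2+c_2\lvert N_2\rvert^2$ with $c_1\neq c_2$ indeed forces $\lvert N_1\rvert,\lvert N_2\rvert$ constant.

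The genuine gap is in the direction you call ``immediate.'' Knowing that $\lvert N_1\rvert,\lvert N_2\rvert$ are constant makes the normal Jacobi operator constant \emph{along each normal geodesic and in $r$}, but the Riccati/Jacobi equation still has a $p$-dependent initial condition $A_p$; ``solving a constant-coefficient equation'' tells you nothing about whether the spectrum of $A_p$ varies with the base point $p\in\Sigma$, which is precisely what must be ruled out. The correct completion uses ingredients you already listed but did not connect: from $\nabla\lvert N_1\rvert^2=-2A(X)$ (with $X$ the tangential part of $PN$), constancy of $\lvert N_1\rvert$ gives $A(X)=0$, hence $\det A=0$ wherever $X\neq 0$; the trace $\tr A=3h(0)$ is constant by isoparametricity; and $\tr(A^2)$ is constant by the very same traced Riccati identity, now read in the opposite direction (since $h'(0)$ and $\widetilde{\rc}(N,N)$ are both constant). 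All three elementary symmetric functions of $A$ are then constant, so the principal curvatures are constant. (The degenerate case $X\equiv 0$, i.e.\ $C=\pm 1$, must be treated separately, but there $N$ is tangent to one factor and $\Sigma$ is locally a product, so the claim is elementary.) This is essentially what the paper does via $\sigma_{1j}=0$, the vanishing of $A_1,A_2,A_3$, and the constancy of $A_4=\sigma_{22}\sigma_{33}-\sigma_{23}^2$ in the characteristic polynomial; without this step your second direction does not go through as written.
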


In addition to the converse of a result obtained by J. Julio-Batalla, which states that if $\lvert N_{1} \rvert$ and $\lvert N_{2} \rvert$ are constant, then $\Sigma$ has constant principal curvatures, Theorem \ref{theo1} also provides the equivalence for the entire class of ambient spaces $\Q_{c_{1}} \times \Q_{c_{2}}$, with $c_1 \neq c_2$. To get this Theorem, we use the theory of Jacobi fields, based on the ideas developed by M. Domínguez-Vázquez and J. M. Manzano in \cite{dominguez-manzano}, to analyze the extrinsic geometry of hypersurfaces parallel to $\Sigma$. It is interesting to note that Jacobi field theory allows us to obtain an alternative proof of J. Julio-Batalla's result. Moreover, we obtain the following general classification of isoparametric hypersurfaces with constant principal curvatures in $\Q_{c_{1}} \times \Q_{c_{2}}$, $c_{1} \neq c_{2}$, which includes the classification for $\mathbb{S}^2 \times \mathbb{R}^2$ given in \cite{s2r2Batalla}:

\begin{theorem}\label{theo2}
Let $\Sigma$ be an isoparametric hypersurface in $\Q_{c_{1}} \times \Q_{c_{2}}$, $c_{1} \neq c_{2}$, with constant principal curvatures. Then, up to rigid motions, $\Sigma$ is an open subset of one of the following hypersurfaces:
\begin{enumerate}[a)]
 \item $\mathcal{C}^{1}(\kappa_{j})\times \Q_{c_{2}}$ or $\Q_{c_{1}} \times \mathcal{C}^{1}(\kappa_{j})$, where $\mathcal{C}^{1}(\kappa_{j})$ is a complete curve with constant geodesic curvature $\kappa_j$ in $\Q_{c_{j}}$.
\item $\Psi(\mathbb{R}^3) \subset \mathbb{H}^2 \times \mathbb{R}^2$, where $\Psi : \mathbb{R}^3 \rightarrow \mathbb{H}^2 \times \mathbb{R}^2$ is an immersion given by
\begin{equation}
\begin{split}
\Psi(t,u,v) &= e^{-b\, t}(\alpha(u),\vec{0})+ \Big(\cosh(-b\, t), 0, \sinh(-b\, t), V_0t \Big) \\
& \quad + \Big(\vec{0}, p_0 + W_0 v \Big),
\end{split}\label{eq:parametrization-Psi}
\end{equation}
where $\mathbb{H}^2 \subset \mathbb{L}^3$ is given as the standard model of the hyperbolic space in the Lorentz 3-space $\mathbb{L}^3$, the curve $\alpha$ is given by $\alpha(u)=\left( \dfrac{u^2}{2},\,u,\,-\dfrac{u^2}{2} \right)$, $V_0$ and $W_0$ are constant orthogonal vectors in $\mathbb{R}^2$ such that $||W_0||=1$ and $b=\sqrt{1-||V_0||^2}$.
\end{enumerate}
\end{theorem}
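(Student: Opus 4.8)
The plan is to combine Theorem~\ref{theo1} with the Gauss, Codazzi and product‑structure equations of a hypersurface of $\Q_{c_1}\times\Q_{c_2}$. Write $F$ for the product structure, i.e.\ the parallel $(1,1)$‑tensor with $F^2=\mathrm{Id}$, $\tr F=0$, whose $(\pm1)$‑eigenbundles are $T\Q_{c_1}$ and $T\Q_{c_2}$, and set $PX=(FX)^{\top}$, $\nu=\langle FN,N\rangle$, $T=FN-\nu N$. By Theorem~\ref{theo1} the functions $|N_1|,|N_2|$ are constant, so $\nu=|N_1|^2-|N_2|^2$ is a constant in $[-1,1]$ and $|T|^2=1-\nu^2$ is constant. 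Differentiating $\nu$ and using $\widetilde\nabla F=0$ gives $X(\nu)=-2\langle SX,T\rangle$, whence $ST=0$, together with the standard identities $\nabla_X T=\nu SX-PSX$ and $(\nabla_X P)Y=\langle SX,Y\rangle T+\langle Y,T\rangle SX$. I would then split the argument according to whether the constant‑length field $T$ vanishes identically or is nowhere zero.

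\textbf{Case $T\equiv0$.} Then $\nu=\pm1$, i.e.\ $N$ is tangent to one factor; say $N=N_1\in T\Q_{c_1}$ and $N_2=0$. Then $T\Q_{c_2}\subset T\Sigma$, $F(T\Sigma)=T\Sigma$, and using $\widetilde\nabla F=0$ one checks that the orthogonal distributions $D_2:=T\Q_{c_2}$ and $D_1:=T\Sigma\cap T\Q_{c_1}$ are both autoparallel in $\Sigma$. By the local de Rham decomposition, $\Sigma$ is locally a Riemannian product $\mathcal C\times\Q_{c_2}$, where $\mathcal C$ is the unit‑speed leaf of $D_1$, a curve in $\Q_{c_1}$. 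The shape operator is then $\mathrm{diag}(\kappa_g,0,0)$ with $\kappa_g$ the geodesic curvature of $\mathcal C$, so constancy of the principal curvatures is equivalent to $\kappa_g$ being constant, i.e.\ to $\mathcal C$ being a complete curve $\mathcal C^1(\kappa_j)$; hence $\Sigma$ is an open subset of $\mathcal C^1(\kappa_j)\times\Q_{c_2}$. Exchanging the two factors gives the two alternatives in item~a).

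\textbf{Case $T$ nowhere zero.} Put $e_3:=T/|T|$. From the identities above, $Se_3=0$ and $\widetilde\nabla_{e_3}e_3=0$, so the integral curves of $e_3$ are geodesics of $\Q_{c_1}\times\Q_{c_2}$. On $T^{\perp}$ the operator $P$ is symmetric, has $\tr(P|_{T^\perp})=0$ (since $\tr P=-\nu$ and $Pe_3=-\nu e_3$) and satisfies $(P|_{T^\perp})^2=\mathrm{Id}$, hence has eigenvalues $+1$ and $-1$. I would first rule out the possibility that $S$ and $P$ commute on $T^{\perp}$ on an open set: in their common eigenframe $\{e_1,e_2\}$, with $Se_i=\lambda_i e_i$ and $\lambda_i$ constant (they are among the principal curvatures), the Codazzi equation along $e_3$ gives $\lambda_1^2=\tfrac12(1+\nu)c_1$ and $\lambda_2^2=\tfrac12(1-\nu)c_2$, while the Gauss equation for the planes $\mathrm{span}\{e_i,e_3\}$ gives $\lambda_1^2=-\tfrac12(1+\nu)c_1$ and $\lambda_2^2=-\tfrac12(1-\nu)c_2$; since $1\pm\nu>0$, this forces $c_1=c_2=0$, contradicting $c_1\ne c_2$. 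Hence on a dense open set the $S$‑ and $P$‑eigenframes of $T^{\perp}$ are distinct; let $\phi$ denote the angle between them and, in this regime, $\lambda_1\ne\lambda_2$.

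Writing $\nabla_X T=\nu SX-PSX$ and $(\nabla_X P)Y=\langle SX,Y\rangle T+\langle Y,T\rangle SX$ in the $S$‑eigenframe $\{e_1,e_2,e_3\}$ expresses every Christoffel symbol of the frame in terms of the constants $\lambda_1,\lambda_2,\nu,|T|$ and of $\phi$; inserting this into the Codazzi equation for $\Q_{c_1}\times\Q_{c_2}$ (whose curvature tensor is the standard expression built from $c_1,c_2$ and $F$) yields first‑order equations for $\phi$ together with algebraic relations, and the Gauss equation supplies the remaining ones. I expect the resulting system to be compatible, up to interchanging the two factors, only when $c_1=-1$ and $c_2=0$, and to determine $\nu,\lambda_1,\lambda_2$ and $\phi$ completely. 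Once all first‑ and second‑order data of the immersion are pinned down, I would integrate the corresponding first‑order system in $\mathbb{H}^2\times\mathbb{R}^2$: the $e_3$‑curves are product geodesics screwing simultaneously through $\mathbb{H}^2$ and $\mathbb{R}^2$, the integral curves of the remaining directions are a horocycle of $\mathbb{H}^2$ (producing the parabola $\alpha(u)=(u^2/2,u,-u^2/2)$) and a straight line of $\mathbb{R}^2$, and matching initial conditions identifies $\Sigma$, up to a rigid motion, with an open subset of $\Psi(\mathbb{R}^3)$ in \eqref{eq:parametrization-Psi}; uniqueness follows from the fundamental theorem of submanifolds. The delicate point throughout is precisely this last case: organizing the Codazzi/Gauss system so that it stays tractable, ruling out every pair $(c_1,c_2)$ with $c_1\ne c_2$ except $\{-1,0\}$, and then integrating it explicitly to recognize the normal form $\Psi$.
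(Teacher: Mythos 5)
Your overall architecture (use Theorem~\ref{theo1} to make $\nu=C$ constant, then split into $T\equiv0$ and $T$ nowhere zero) matches the paper's, and the $T\equiv 0$ case is essentially correct. The genuine gap is in the case $|T|>0$: your dichotomy is inverted because of a sign error, and you end up discarding exactly the configuration that produces item~b). In the $P$-eigenframe $\{e_1,e_2\}$ of $T^{\perp}$ (the paper's $B_2,B_3$), the Codazzi equation contracted with $T$ gives, when $S$ and $P$ commute there, $\lambda_1^2=-\tfrac12(1+\nu)c_1$ and $\lambda_2^2=-\tfrac12(1-\nu)c_2$ --- note the minus signs --- which is solvable precisely when one $c_i$ is negative and the other is zero; this commuting case is the one that yields $\Psi$. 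Your extra relations ``from the Gauss equation for the planes $\mathrm{span}\{e_i,e_3\}$'' do not exist: since $Se_3=0$, the second-fundamental-form terms in the Gauss equation for those planes vanish identically, so Gauss only computes the (a priori unknown) intrinsic curvature of $\Sigma$ and imposes no constraint on $\lambda_i$. Hence your conclusion that the commuting case forces $c_1=c_2=0$ is false.

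Symmetrically, the non-commuting regime that you retain is the one that must be excluded: writing $\sigma_{23}=\langle Se_1,e_2\rangle\neq0$ in the $P$-eigenframe, the same Codazzi relations combine to $c_1(1-\nu)=c_2(1+\nu)$, which is impossible for $\nu\in(-1,1)$ and $c_1\neq c_2$. So the program of introducing the angle $\phi$ and integrating a first-order system in that regime terminates in the empty set, and the example $\Psi$ is never produced. Two further points you would still need after correcting the signs: (i) constancy of the principal curvatures only gives constancy of $\sigma_{22}+\sigma_{33}$ and of $\sigma_{22}\sigma_{33}-\sigma_{23}^2$ in the $P$-eigenframe, so one must first show that the entries are \emph{individually} constant (equivalently, that your angle $\phi$ is constant); the paper gets this from the second derivative $f''(0)=0$ of the Jacobi-field determinant, i.e.\ from the isoparametric hypothesis on the parallel family, and your proposal supplies no substitute; (ii) the explicit integration leading to the normal form \eqref{eq:parametrization-Psi} (horocycle in $\mathbb{H}^2$, geodesic flow with the correct exponential factor $e^{-bt}$) is only sketched, whereas it is where the concrete parametrization, including the sign $r=-\lVert B_1^{\Hi^2}\rVert$, is actually determined.
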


Recall that, besides of the geodesics, the complete curves $\mathcal{C}^1(\kappa_j) \subset \Q_{c_{j}}$ with constant geodesic curvature are given by: $\mathbb{S}^1(t) \subset \mathbb{S}^2$ for $t \in (0,1)$; circles, horocycles or hypercycles in $\mathbb{H}^2$; and $\mathbb{S}^1(r) \subset \mathbb{R}^2$ for $r \in \R^{+}$. Regarding the hypersurfaces given in Theorem \ref{theo2}.b), geometrically, $\Psi(\mathbb{R}^3$) provides a hypersurface given as a family of geodesically parallel surfaces given by the products $\mathcal{C}^1(1) \times \mathbb{R}$, where $\mathcal{C}^1(1) \subset \mathbb{H}^2$ is a horocycle (see Remark \ref{rmk:geometry-psi}). \\

The paper is organized as follows. In Section \ref{sec2}, we provide some preliminary concepts and notations that will be used throughout the work. Section \ref{sec3} is devoted to the proof of Theorems~\ref{theo1} and \ref{theo2}. Using Jacobi field theory, we start by proving Theorem~\ref{theo1}, which characterizes isoparametric hypersurfaces with constant principal curvatures in $ \Q_{c_{1}} \times \Q_{c_{2}}$. Then, using Theorem~\ref{theo1}, we classify these hypersurfaces by proving Theorem~\ref{theo2}.

\section{Preliminary notions and results}\label{sec2}

Before proving our main results, let us present some background content on complex and product structures, the Jacobi field theory and isoparametric functions.

Let $\Q_{c_{1}}$ and $\Q_{c_{2}}$ be two 2-dimensional space forms with distinct constant sectional curvatures $c_{1}$ and $c_{2}$, respectively. For $i=1,2$, we denote by $L_{i}$ the standard complex structure in $\Q_{c_{i}}.$ If $\Q_{c_{i}}$ is the 2-dimensional sphere $\Sf^2$ of curvature $c_{i} = 1$, $L_{i}$ is given by
\begin{align*}
\begin{split}
L_{i} : \ &T\Sf^2 \longrightarrow T\Sf^2 \\
& v \longrightarrow L_{i}(v) = p \times v, 
\end{split}
\end{align*}
for  $p \in \Sf^2$, $ v \in T_{p}\Sf^2$, see \cite{dillen2012constant}. When $\Q_{c_{i}}$ is the hyperbolic space $\Hi^2$ of curvature $c_{i} = -1$, we will consider its standard Lorentzian model, i.e.,
$$\Hi^2 = \{(x_{1},x_{2},x_{3}) \in \mathbb{L}^3 \mid -x_{1}^2 + x_{2}^2 + x_{3}^2 = -1 \, \text{and} \, x_{1} > 0\},$$
where $\mathbb{L}^3$ is the 3-dimensional Minkowski space endowed with the Lorentzian cross product $\boxtimes$, defined by
$$ (a_{1},a_{2},a_{3})\boxtimes(b_{1},b_{2},b_{3})  = (a_{3}b_{2} - a_{2}b_{3},a_{3}b_{1} - a_{1}b_{3},a_{1}b_{2} - a_{2}b_{1}).$$
In this model, $L_{i}$ is given by
\begin{align*}
\begin{split}
L_{i} : \ &T\Hi^2 \longrightarrow T\Hi^2 \\
& v \longrightarrow L_{i}(v) = p \boxtimes v, 
\end{split}
\end{align*}
for  $p \in \Hi^2$, $ v \in T_{p}\Hi^2$, see \cite{dillen2012constant} and \cite{gao2021lagrangian}. Finally, if $\Q_{c_{i}}$ is the space form $\R^2$ of curvature $c_i=0$, $L_{i}$ is defined by
\begin{align*}
\begin{split}
L_{i} : \ &\R^2 \longrightarrow \R^2 \\
& v \longrightarrow L_{i}(q_{1},q_{2}) = (-q_{2},q_{1}), 
\end{split}
\end{align*}
see \cite{s2r2Batalla}.

We endow $\Q_{c_{1}} \times \Q_{c_{2}}$ with the standard product metric, denoted by $\langle , \rangle$. Moreover, given $Y \in T(\Q_{c_{1}}~\times~\Q_{c_{2}})$, we write $Y=Y^{\Q_{c_{1}}} + Y^{\Q_{c_{2}}}$, where the components $Y^{\Q_{c_{1}}}$ and $Y^{\Q_{c_{2}}}$ of $Y$ are given as its tangent parts to $\Q_{c_{1}}$ and $\Q_{c_{2}}$, respectively. We define on $\Q_{c_{1}} \times \Q_{c_{2}}$ the complex strutures
\begin{equation*}
J_{1} = L_{1} + L_{2}, \quad  J_{2} = L_{1} - L_{2},
\end{equation*}
and we denote by $\widetilde{\nabla}$ and $\widetilde{R}$ its Levi-Civita connection and curvature tensor, respectively.

Now, let us consider the product structure $P$ in $\Q_{c_{1}}~\times~\Q_{c_{2}}$ defined by $$P\Big(Y^{\Q_{c_{1}}} + Y^{\Q_{c_{2}}}\Big) = Y^{\Q_{c_{1}}} - Y^{\Q_{c_{2}}},$$
for any vector $Y \in T(\Q_{c_{1}}~\times~\Q_{c_{2}})$. Note that $P$ satisfies
\begin{equation*}
P = -J_{1}J_{2} = -J_{2}J_{1}.
\end{equation*}
Moreover, $P$ has the following properties:
\begin{align*}
&P^{2} = I \ (P \neq I), \quad \langle PY,Z \rangle = \langle Y,PZ \rangle, \quad \mbox{and} \quad (\widetilde{\nabla}_{Y}P)(Z) = 0,
\end{align*}
for any vector field $Y, Z \in T(\Q_{c_{1}}~\times~\Q_{c_{2}})$. Using the product structure $P$, $ \widetilde{R} $ is given  by
\begin{align*}
\widetilde{R}(V,W,Z,Y) &= \frac{c_{1}}{4}\biggr\{\langle V,PY + Y \rangle\langle PW + W,Z \rangle - \langle W,PY + Y \rangle\langle PV + V,Z \rangle\biggr\} \nonumber \\
& \quad + \frac{c_{2}}{4}\biggr\{\langle V,PY - Y \rangle\langle PW - W,Z \rangle - \langle W,PY - Y \rangle\langle PV - V,Z \rangle\biggr\},
\end{align*}
where $V, W, Z, Y \in T(\Q_{c_{1}} \times \Q_{c_{2}})$, see \cite{dillen2012constant}.

Let $\Sigma^3 \subset \Q_{c_{1}} \times \Q_{c_{2}}$ be an oriented hypersurface with unit normal vector $N = N_{1} + N_{2}$ and Levi-Civita connection $ \nabla $. We define in $\Sigma^3$ a smooth function $C$ and a tangent vector field $X$ by
\begin{equation}\label{funC}
C = \langle PN,N \rangle \quad \mbox{and} \quad X = PN - CN.
\end{equation}

Observe that $X$ is the tangential component of $PN$ and $\lvert X \rvert^2 = 1 - C^2$, which implies $-1 \leq C \leq 1 $. 

Using the curvature tensor of $\Q_{c_{1}} \times \Q_{c_{2}}$ and the vector field $X$ defined above, the Codazzi equation of $\Sigma$ is given by
\begin{align*}
	\nabla S(V,W,Z) - \nabla S(W,V,Z) &= \widetilde{R}(V,W,Z,N),
\end{align*}
where
\begin{align*}
\widetilde{R}(V,W,Z,N)	&= \frac{c_{1}}{4}\biggr\{\langle V, PN + N \rangle \langle PW + W , Z \rangle - \langle W, PN + N \rangle \langle PV + V , Z \rangle \biggr\}\\
	& \quad + \frac{c_{2}}{4}\biggr\{\langle V, PN - N \rangle \langle PW - W , Z \rangle - \langle W, PN - N \rangle \langle PV - V , Z \rangle \biggr\}\\
	&= \frac{c_{1}}{4}\biggr\{\langle V, X \rangle \langle PW + W , Z \rangle - \langle W, X \rangle \langle PV + V , Z \rangle \biggr\}\\
	& \quad + \frac{c_{2}}{4}\biggr\{\langle V, X \rangle \langle PW - W , Z \rangle - \langle W, X \rangle \langle PV - V , Z \rangle\biggr\},
\end{align*}
with $V, W, Z \in T\Sigma$.

In this work, we will use the Jacobi field theory to analyze the extrinsic geometry of hypersurfaces equidistant to the hypersurface $\Sigma$. In what follows, we will give a brief description of this theory. For more details, we refer to \cite{bookOlmosCia, notas-miguel}.

Given a hypersurface $\Sigma^n$ of a Riemannian manifold $\widetilde{M}^{n+1}$ with unit normal vector field $N$, let $\varepsilon$ be a positive real number and, for $r \in (-\varepsilon, \varepsilon)$, consider the application
\begin{equation}\label{paralellhypersurfaces}
\begin{array}{rcl}
     \Phi_r: \Sigma^n & \rightarrow & \widetilde{M}^{n+1}, \\
             p &\mapsto& \exp_{p}(rN_{p}), 
\end{array}
\end{equation}
where $\exp_{p}: T_{p}\widetilde{M} \rightarrow \widetilde{M}$ denotes the exponential map of $\widetilde{M}^{n+1}$ at $p \in \Sigma$. For $\varepsilon>0$ small enough, the map $\Phi_r$ is smooth and it parametrizes the parallel displacement of $\Sigma$ at an oriented distance $r$ in the direction $N$. The parallel hypersurface $\Phi_r(\Sigma)$ will be denoted by $\Sigma_r$.

Let $\gamma_p: I \rightarrow \widetilde{M}$ be the geodesic parametrized by arc length with $0 \in I \subset \R$, $\gamma_p(0)=p \in \Sigma$ and $\dot{\gamma_p}(0) = N_{p}$. Let $\zeta_Y$ be the Jacobi field along $\gamma_p$ with initial conditions given by
$$
\zeta_Y(0)=Y, \, \textnormal{ and } \, \zeta_Y'(0) = - AY,
$$
where $A$ is the shape operator of $\Sigma$ associated with $N$. Then, a unit normal vector to $\Sigma_r$ at $\gamma_p(r)$ is given by $\dot{\gamma_p}(r)$ and its correspondent shape operator satisfies 
$$
A_r \zeta_Y(r) = - \zeta_Y'(r).
$$
If we write $\zeta_Y(r)=D(r)\widetilde{P}_Y(r)$, where $D(r)$ is an endomorphism acting on $T_{\gamma_p(r)}\Sigma_r$ and $\widetilde{P}_Y(r)$ is the parallel transport of $Y$ along $\gamma_p$, then we have
\begin{equation}\label{A-paralell}
A_r = -(D'\circ D^{-1})(r).
\end{equation}
Consequently, by the Jacobi formula, the mean curvature of the hypersurface $\Sigma_r$ is given by
\begin{equation}
h(r) = - \dfrac{(\det D)'}{n \det D}(r). \label{H-parallel}
\end{equation}

Finally, we introduce the notion of isoparametric function. A non-constant smooth function $f : \widetilde{M}^{n+1} \longrightarrow \mathbb{R} $ is called isoparametric if the gradient and the Laplacian of $f$ satisfy
$$\lvert \nabla f \lvert^{2} = a(f) \quad \mbox{and} \quad \Delta f = b(f),$$
where $a,b: I \subset \mathbb{R} \longrightarrow \mathbb{R}$ are smooth functions. The smooth hypersurfaces $\Sigma_{r} = f^{-1}(r)$ for $r$ regular value of $f$ are called isoparametric hypersurfaces. In this case, the unit normal vector field is given by $N = \frac{\nabla f}{\lvert \nabla f \rvert}$. We observe that, by the conditions under the gradient and the Laplacian given in the definition of an isoparametric function, $\Sigma_r$ has constant mean curvature for each $r$ (i.e., depending only on $r$) and $N$ is a geodesic field, see \cite{notas-miguel}.

\section{Proof of the main results}\label{sec3}

To prove Theorems \ref{theo1} and \ref{theo2}, we combine the techniques developed by F. Urbano \cite{s2s2Urbano}, J. Julio-Batalla \cite{s2r2Batalla}, and Domínguez-Vázquez and Manzano \cite{dominguez-manzano}.

\begin{proof}[Proof of Theorem~\ref{theo1}] Let $\Sigma$ be an isoparametric hypersurface in $\Q_{c_{1}} \times \Q_{c_{2}}$ with $c_{1}\neq c_{2}$ and unit normal $N = N_{1} + N_{2}$. In order to prove Theorem~\ref{theo1}, it is enough to show that the principal curvatures of $\Sigma$ are constant if and only if the function $C$, given in \eqref{funC}, is constant. In fact, as $\lvert N_{1} \rvert^2 = \frac{1+C}{2}$ and $\lvert N_{2} \rvert^2 = \frac{1-C}{2}$, it follows that $\lvert N_{1} \rvert$ and $\lvert N_{2} \rvert$ are constant if and only if $C$ is constant.

Recall that the family of hypersurfaces parallel to $\Sigma$ in the direction of $N$ is given by \eqref{paralellhypersurfaces} and the parallel hypersurface at an oriented distance $r$ is denoted by $\Sigma_r$. We first observe that, since $\Sigma$ is isoparametric and the product structure $P$ is parallel, the function $C$, defined on the family of parallel hypersurfaces, does not depend on the displacement parameter $r$, once $N(C) = 0$. In fact, since $C = \langle PN, \, N \rangle$ and $\nabla_{N}N=0$, we have
$$N(C)= \langle \nabla_{N}N, PN \rangle + \langle N, P\nabla_{N}N \rangle = 0.$$

Now we prove that $C$ is constant along $\Sigma$. Let us recall that $|C| \leq 1$. Consider the open set
$$ U = \big\{ p \in \Sigma  \mid C^2(p) < 1 \big\}. $$
We can assume that $U \neq \varnothing$, otherwise $C^2=1$ on $\Sigma$. In this case, let us take in $U$ the following orthonormal frame
\begin{equation*}
B =  \biggr\{ B_{1} = \frac{X}{\sqrt{1 - C^2}}, B_{2} = \frac{J_{1}N + J_{2}N}{\sqrt{2(1+C)}}, B_{3} = \frac{J_{1}N - J_{2}N}{\sqrt{2(1-C)}} \biggr\},
\end{equation*}
where $X = PN - CN.$

Given $p \in \Sigma$, let $\gamma_{p}$ be a geodesic of $\Q_{c_{1}} \times \Q_{c_{2}}$ with $\gamma_p(0) = p$ and $\dot{\gamma}_p(0) = N_p$. By the definition of $\Sigma_r$ we have that $\dot{\gamma}_{q}(r)$ is a normal vector to $\Sigma_{r}$ at $\gamma_q(r)$. Thus, we can extend the unit normal $N$ to $U\times(-\epsilon,\epsilon)$ by $N_{\gamma_{q}(r)} = \dot{\gamma}_{q}(r)$, $q \in U$. Consequently, we also can extend the fields $B_{i}$.

Recall that a Jacobi field along $\gamma_{p}$ is a vector field $\xi$ satisfying the Jacobi equation $\xi'' + R(\xi,\dot{\gamma}_{p})\dot{\gamma}_{p} = 0$. For each $j \in \{1,2,3\}$, take the Jacobi field $\xi_{j}$ along $\gamma_{p}$ with the initial conditions
\begin{equation}\label{jacobi-initialcondicions}
\xi_{j}(0) = B_{j} \quad \mbox{and} \quad \xi_{j}'(0) = -AB_{j},
\end{equation}
where $A$ is the shape operator of $\Sigma$ associated with $N$.

Since these initial conditions are orthogonal to $\dot{\gamma}_{p}(0)$, each Jacobi field $\xi_{j}$ is also orthogonal to $N_{\gamma_{p}(r)} = \dot{\gamma}_{p}(r)$ and, hence, it can be written as 
$$\xi_{j} = b_{1j}B_{1} + b_{2j}B_{2} + b_{3j}B_{3},$$
for certain smooth functions $b_{ij}$ on $(-\epsilon,\epsilon)$.

Let us observe that $\nabla_{N}B_{i}=0$, for all $i = 1,2,3$. In fact, since $N(C)=0$ and $P$ is parallel, we have $\nabla_N X=0$, which implies $\nabla_N B_1=0$. Furthermore, since $J_i$ is also parallel, for $i=1,\,2$, we conclude that $\nabla_N B_j=0$, $j=2,\,3.$ Thus, we have, on the one hand,
\begin{equation}\label{jacobi-second-derivative}
\xi_{j}'' = b_{1j}''B_{1} + b_{2j}''B_{2} + b_{3j}''B_{3}.
\end{equation}
On the other hand, if we denote by $R^{c_i}$ the curvature tensor of $\Q_{c_{i}}$, we get
\begin{align*}
\widetilde{R}(B_{1},N)N &= R^{c_1}(B_{1}^{c_{1}},N_{1})N_{1} + R^{c_2}(B_{1}^{c_{2}},N_{2})N_{2} \\
&= \frac{1}{8\sqrt{1 - C^2}}\biggr(R^{c_{1}}(X + PX,N + PN)(N + PN) \\
& \quad + R^{c_{2}}(X - PX,N - PN)(N - PN) \biggr) \\
&= 0,
\end{align*}
since $X + PX = (1 - C)(N + PN)$ and  $X - PX = -(1 + C)(N - PN)$. Now, using the curvature tensor formula of a manifold of constant sectional curvature, we get
\begin{align*}
\widetilde{R}(B_{2},N)N &= \frac{c_{1}\lvert N + PN \rvert^{2}}{4}B_{2}, \\
\widetilde{R}(B_{3},N)N &= \frac{c_{2}\lvert N - PN \rvert^{2}}{4}B_{3}.
\end{align*}
Therefore,
\begin{align}\label{curvaturetensorRN}
\begin{split}
\widetilde{R}(\xi_{j},\dot{\gamma}_{p})\dot{\gamma}_{p} &= \widetilde{R}(\xi_{j},N)N \\
&= b_{1j}R(B_{1},N)N + b_{2j}R(B_{2},N)N + b_{3j}R(B_{3},N)N \\
&= b_{2j}\frac{c_{1}\lvert N + PN \rvert^{2}}{4}B_{2} + b_{3j}\frac{c_{2}\lvert N - PN \rvert^{2}}{4}B_{3} \\
&= b_{2j}\frac{c_{1}(1 + C)}{2}B_{2} + b_{3j}\frac{c_{2}(1 - C)}{2}B_{3}.
\end{split}
\end{align}
Since $\xi_{j}$ is a Jacobi field, we have from \eqref{jacobi-second-derivative} and \eqref{curvaturetensorRN} the following homogeneous linear system of ordinary differential equations
\begin{equation}\label{systemequations}
b_{1j}'' = 0, \quad b_{2j}'' + \delta_{1}b_{2j} = 0, \quad b_{3j}'' + \delta_{2}b_{3j} = 0,
\end{equation}
where $\delta_{1} = \frac{c_{1}(1 + C)}{2}$ and $\delta_{2} = \frac{c_{2}(1 - C)}{2}$. 

In the sequence, we describe the initial conditions of the system \eqref{systemequations}. Firstly, as $\xi_{j}(0) = B_{j}$, we get 
\begin{align}\label{initialconditions-bij}
\begin{array}{llll}
&b_{11}(0) = 1, &b_{12}(0) = 0, &b_{13}(0) = 0,\\
&b_{21}(0) = 0, &b_{22}(0) = 1, &b_{23}(0) = 0,\\
&b_{31}(0) = 0, &b_{32}(0) = 0, &b_{33}(0) = 1.
\end{array}
\end{align}

Secondly, let the shape operator of $\Sigma$ be determined by the relations $A B_{i} = \sigma_{i1}B_{1} + \sigma_{i2}B_{2} + \sigma_{i3}B_{3}$, for certain smooth functions $\sigma_{ij}$. Since $A$ is symmetric, we have $\sigma_{12} = \sigma_{21}$, $\sigma_{13} = \sigma_{31}$ and $\sigma_{32} = \sigma_{23}$. 
Furthermore, taking into account that $\xi_{j}' = \widetilde{\nabla}_{N}\xi_{j} = -A \xi_{j}$, we obtain
\begin{align}\label{initialconditions-b'ij}
\begin{array}{llll}
&b_{11}'(0) = -\sigma_{11}, &b_{12}'(0) = -\sigma_{21}, &b_{13}'(0) = -\sigma_{31},\\
&b_{21}'(0) = -\sigma_{12}, &b_{22}'(0) = -\sigma_{22}, &b_{23}'(0) = -\sigma_{23},\\
&b_{31}'(0) = -\sigma_{13}, &b_{32}'(0) = -\sigma_{23}, &b_{33}'(0) = -\sigma_{33}.
\end{array}
\end{align}
With the initial conditions \eqref{initialconditions-bij} and \eqref{initialconditions-b'ij}, the solution of system \eqref{systemequations} is given by
\begin{align}\label{solution-system}
\begin{split}
b_{11}(r) &= -\sigma_{11}r + 1, \\
b_{12}(r) &= -\sigma_{12}r, \\
b_{13}(r) &= -\sigma_{13}r, \\
b_{21}(r) &= -\sigma_{12}S_{\delta_{1}}(r), \\
b_{22}(r) &= -\sigma_{22}S_{\delta_{1}}(r) + C_{\delta_{1}}(r), \\
b_{23}(r) &= -\sigma_{32}S_{\delta_{1}}(r), \\
b_{31}(r) &= -\sigma_{13}S_{\delta_{2}}(r), \\
b_{32}(r) &= -\sigma_{32}S_{\delta_{2}}(r), \\
b_{33}(r) &= -\sigma_{33}S_{\delta_{2}}(r) + C_{\delta_{2}}(r),
\end{split}
\end{align}

where we consider the auxiliary functions
\begin{equation*}
S_{\delta_{i}}(r)=\begin{cases}\frac{1}{\sqrt{-\delta_{i}}}\sinh(r\sqrt{-\delta_{i}})&\text{if } \delta_{i} < 0,\\
\frac{1}{\sqrt{\delta_{i}}}\sin(r\sqrt{\delta_{i}})&\text{if } \delta_{i} > 0,
\end{cases}\qquad 
C_{\delta_{i}}(r)=\begin{cases}\cosh(r\sqrt{-\delta_{i}})&\text{if } \delta_{i} < 0,\\
\cos(r\sqrt{\delta_{i}})&\text{if } \delta_{i} > 0.
\end{cases}
\end{equation*}
for $i \in \{1,2\}$.

For every $r$, the shape operator $A_{r}$ of $\Sigma_{r}$ with respect to the normal $\gamma_{p}'(r)$ is given by \eqref{A-paralell}, where $D(r)$ is linear endomorphism of $T_{\gamma_{p}(r)}\Sigma_{r}$, determined by the relations
\begin{equation*}
D(r)B_{j}(\gamma_{p}(r)) = \xi_{j}(r), \quad D'(r)B_{j}(\gamma_{p}(r)) = \xi_{j}'(r).
\end{equation*}

Considering the orthonormal basis $\{B_{1}(\gamma_{p}(r)), B_{2}(\gamma_{p}(r)), B_{3}(\gamma_{p}(r))\}$ of $T_{\gamma_{p}(r)}\Sigma_{r}$, the matrix form of the operator $D(r)$ is given by
\begin{align}\label{matrix-operatorD}
D(r) =
\left(
\begin{array}{ccc}
b_{11}(r) & b_{12}(r) & b_{13}(r) \\
b_{21}(r) & b_{22}(r) & b_{23}(r) \\
b_{31}(r) & b_{32}(r) & b_{33}(r)
\end{array}
\right), 
\end{align}

From now on, our strategy is given as follows. Firstly, we are going to get explicitly the formulas of $\det D(r)$ and $\frac{d}{dr}(\det D(r))$ in terms of the functions $b_{ij}$ and its derivatives. Secondly, will apply such formulas to construct
\begin{equation*}
f(r) = \frac{d}{dr}(\det D(r)) + 3h(r) \det D(r),
\end{equation*}
which vanishes identically on $(-\epsilon, \epsilon)$, by equation \eqref{H-parallel}. Finally, we will use the fact that $f \equiv 0$ as well as its derivatives to obtain some algebraic relations between the components of $A$ on the basis $\left\{ B_i \right\}_{i=1}^3$ and the function $C$.

From \eqref{solution-system} and \eqref{matrix-operatorD}, we have that
\begin{align*}
\det D(r) &= A_{1}rS_{\delta_{1}}(r)S_{\delta_{2}}(r) + A_{2}rS_{\delta_{1}}(r)C_{\delta_{2}}(r) + A_{3}rS_{\delta_{2}}(r)C_{\delta_{1}}(r) \\
& \quad + A_{4}S_{\delta_{1}}(r)S_{\delta_{2}}(r) - \sigma_{11}rC_{\delta_{1}}(r)C_{\delta_{2}}(r) - \sigma_{22}S_{\delta_{1}}(r)C_{\delta_{2}}(r) \\
& \quad - \sigma_{33}S_{\delta_{2}}(r)C_{\delta_{1}}(r) + C_{\delta_{1}}(r)C_{\delta_{2}}(r),
\end{align*}
where
\begin{align}\label{eq:Ai}
\begin{array}{lll}
& A_{1} = -\det A,  & A_{2} = \sigma_{11}\sigma_{22} - \sigma^2_{12}, \\
& A_{3} = \sigma_{11}\sigma_{33} - \sigma^2_{13},  & A_{4} = \sigma_{22}\sigma_{33} - \sigma^2_{23}.
\end{array}
\end{align}

Now, taking into account that $S'_{\delta_{i}}(r) = C_{\delta_{i}}(r)$ and $C'_{\delta_{i}}(r) = -\delta_{i}S_{\delta_{i}}(r)$, we obtain
\begin{align*}
\frac{d}{dr}(\det D(r)) &= A_{1}\left(S_{\delta_{1}}(r)S_{\delta_{2}}(r) + rC_{\delta_{1}}(r)S_{\delta_{2}}(r) + rS_{\delta_{1}}(r)C_{\delta_{2}}(r)\right) \\
& \quad + A_{2}\left(S_{\delta_{1}}(r)C_{\delta_{2}}(r) + rC_{\delta_{1}}(r)C_{\delta_{2}}(r) - r\delta_{2}S_{\delta_{1}}(r)S_{\delta_{2}}(r)\right) \\
& \quad + A_{3}\left(S_{\delta_{2}}(r)C_{\delta_{1}}(r) + rC_{\delta_{2}}(r)C_{\delta_{1}}(r) - r\delta_{1}S_{\delta_{2}}(r)S_{\delta_{1}}(r)\right) \\
& \quad + A_{4}\left(C_{\delta_{1}}(r)S_{\delta_{2}}(r) + S_{\delta_{1}}(r)C_{\delta_{2}}(r)\right) \\
& \quad - \sigma_{11}\left(C_{\delta_{1}}(r)C_{\delta_{2}}(r) - r\delta_{1}S_{\delta_{1}}(r)C_{\delta_{2}}(r) - r\delta_{2}C_{\delta_{1}}(r)S_{\delta_{2}}(r)\right) \\
& \quad - \sigma_{22}\left(C_{\delta_{1}}(r)C_{\delta_{2}}(r) - \delta_{2}S_{\delta_{1}}(r)S_{\delta_{2}}(r)\right) \\
& \quad - \sigma_{33}\left(C_{\delta_{2}}(r)C_{\delta_{1}}(r) - \delta_{1}S_{\delta_{2}}(r)S_{\delta_{1}}(r)\right) \\
& \quad - \delta_{1}S_{\delta_{1}}(r)C_{\delta_{2}}(r) - \delta_{2}C_{\delta_{1}}(r)S_{\delta_{2}}(r).
\end{align*}
Thus, the function $f$ is given explicitly as
\begin{align}\label{function-f}
\begin{split}
f(r) &=
A_{1}\big(S_{\delta_{1}}(r)S_{\delta_{2}}(r) + rC_{\delta_{1}}(r)S_{\delta_{2}}(r) + rS_{\delta_{1}}(r)C_{\delta_{2}}(r) \\
& \quad + 3rh(r)S_{\delta_{1}}(r)S_{\delta_{2}}(r)\big) \\
& \quad + A_{2}\big(S_{\delta_{1}}(r)C_{\delta_{2}}(r) + rC_{\delta_{1}}(r)C_{\delta_{2}}(r) - r\delta_{2}S_{\delta_{1}}(r)S_{\delta_{2}}(r) \\
& \quad + 3rh(r)S_{\delta_{1}}(r)C_{\delta_{2}}(r)\big) \\
& \quad + A_{3}\big(S_{\delta_{2}}(r)C_{\delta_{1}}(r) + rC_{\delta_{2}}(r)C_{\delta_{1}}(r) - r\delta_{1}S_{\delta_{2}}(r)S_{\delta_{1}}(r) \\
& \quad + 3rh(r)S_{\delta_{2}}(r)C_{\delta_{1}}(r)\big) \\
& \quad + A_{4}\big(C_{\delta_{1}}(r)S_{\delta_{2}}(r) + S_{\delta_{1}}(r)C_{\delta_{2}}(r) + 3h(r)S_{\delta_{1}}(r)S_{\delta_{2}}(r)\big) \\
& \quad - \sigma_{11}\big(C_{\delta_{1}}(r)C_{\delta_{2}}(r) - r\delta_{1}S_{\delta_{1}}(r)C_{\delta_{2}}(r) - r\delta_{2}C_{\delta_{1}}(r)S_{\delta_{2}}(r) \\
& \quad + 3rh(r)C_{\delta_{1}}(r)C_{\delta_{2}}(r)\big) \\
& \quad - \sigma_{22}\big(C_{\delta_{1}}(r)C_{\delta_{2}}(r) - \delta_{2}S_{\delta_{1}}(r)S_{\delta_{2}}(r) + 3h(r)S_{\delta_{1}}(r)C_{\delta_{2}}(r)\big) \\
& \quad - \sigma_{33}\big(C_{\delta_{2}}(r)C_{\delta_{1}}(r) - \delta_{1}S_{\delta_{2}}(r)S_{\delta_{1}}(r) + 3h(r)S_{\delta_{2}}(r)C_{\delta_{1}}(r)\big) \\
& \quad - \delta_{1}S_{\delta_{1}}(r)C_{\delta_{2}}(r) - \delta_{2}C_{\delta_{1}}(r)S_{\delta_{2}}(r) + 3h(r)C_{\delta_{1}}(r)C_{\delta_{2}}(r).
\end{split}
\end{align}

As $f \equiv 0$, so is its derivative. Then, taking the derivative in \eqref{function-f} and applying at $r=0$, we obtain the following relation:
\begin{align}
0 = f'(0) &= 2(A_{2} + A_{3} + A_{4}) - 9h^{2}(0) + 3h'(0) - (\delta_{1} + \delta_{2}), \label{eq:f'(0)}
\end{align}
where $h(0)$ is the mean curvature of $\Sigma$.

Note that $A_{i}$, $\delta_{i}$, $h (0)$ and $h'(0)$, depend only, in principle, of the base point $p \in \Sigma$. However, by assumption, $\Sigma$ is isoparametric and hence, $h (0)$ and $h'(0)$ are constants throughout $\Sigma$, that is, it is independent of the chosen base point $p \in \Sigma$ of normal geodesic $\gamma_{p}$.

Furthermore, observe that
\begin{equation*}
9h^{2}(0) = \sigma^{2}_{11} + \sigma^{2}_{22} + \sigma^{2}_{33} +2(\sigma_{11}\sigma_{22} + \sigma_{11}\sigma_{33} + \sigma_{22}\sigma_{33}),
\end{equation*}
and
\begin{equation*}
tr (A^2) = \sigma^{2}_{11} + \sigma^{2}_{22} + \sigma^{2}_{33} + 2(\sigma^{2}_{12} + \sigma^{2}_{13} + \sigma^{2}_{23}).
\end{equation*}
Thus, by the definitions of the $A_i's$ in \eqref{eq:Ai}, we have $2(A_{2} + A_{3} + A_{4}) - 9h^{2}(0) = -tr (A^2)$. Substituting in \eqref{eq:f'(0)}, we get
\begin{equation*}
tr (A^2) = 3h'(0) - (\delta_{1} + \delta_{2}),
\end{equation*}
where $\delta_{1} + \delta_{2} = \frac{1}{2}(C(c_{1} - c_{2}) + c_{1} + c_{2})$.

Therefore, if $\Sigma$ has constant principal curvatures $\mu_{1}$, $\mu_{2}$, $\mu_{3}$, then $tr(A^2) = \mu_{1}^2 + \mu_{2}^2 + \mu_{3}^2$ is constant and hence, $C$ is constant, since $c_1 \neq c_2$.

Conversely, suppose $C$ is constant. Since the gradient of the function $C$ is given by $\nabla C = -2A(X)$ (see [Lemma $1$, \cite{s2s2Urbano}]), then $ A(X) = 0$. Therefore, $ \sigma_{1j} = \sigma_{j1} = 0$,  for all $j = 1,2,3$. Thus, we have $A_{1} = A_{2} = A_{3} = 0$ and we can rewrite \eqref{eq:f'(0)} as
\begin{align*}
0 = 2A_{4} - 9h^{2}(0) + 3h'(0) - (\delta_{1} + \delta_{2}), \label{rewrited-eq:f'(0)}
\end{align*}
and, as a consequence, we have that $A_{4}$ is constant.

Moreover, as $ \sigma_{1j} = \sigma_{j1} = 0$, the characteristic polynomial $Q_{A}$ of $A$ is given by
$$Q_{A}(\lambda) = -\lambda^{3} + 3h(0)\lambda^{2} - A_{4}\lambda.$$
Therefore, since $A_{4}$ is constant, it follows that the principal curvatures of $\Sigma$ are constant.
\end{proof}

\begin{proof}[Proof of Theorem~\ref{theo2}]
Let $\Sigma$ be an isoparametric hypersurface in $\Q_{c_{1}} \times \Q_{c_{2}}$ with constant principal curvatures. By Theorem~\ref{theo1}, we have that $C$ is constant. If $C=1$ we have $PN = N$, and thus, $N = (N_{1},0)$. If $C=-1$ we have $PN = -N$, and then, $N = (0, N_{2})$. In such cases, $\Sigma$ is an open subset of $\mathcal{C}^{1}(\kappa_{j})\times \Q_{c_{2}}$ or $\Q_{c_{1}} \times \mathcal{C}^{1}(\kappa_{j})$, respectively, where $\mathcal{C}^{1}(\kappa_{j})$ is a complete curve in $\Q_{c_{j}}$ of constant geodesic curvature $\kappa_{j}$. In fact, let us suppose that $N=(N_1,0)$, then $\Sigma$ is an open subset of $\mathcal{C}^{1}\times \Q_{c_{2}}$, where $\mathcal{C}^{1}$ is a regular curve in $\Q_{c_1}$. Let $\psi$ be a parametrization by arc length of $\mathcal{C}^{1}$, with unit normal vector $n_{\psi} = \pm N_1$. Let $\{e_1,e_2,e_3\}$ a orthonormal frame in $\mathcal{C}^{1}\times \Q_{c_{2}}$, with $e_1 = \psi'$ and $\{e_2,e_3\}$ an orthonormal basis in $\Q_{c_{2}}$. If we denote the shape operator of $\Sigma$ by $A$, considering without loss of generality that $N_{1}= n_{\psi}$, we have
\begin{align*}
Ae_1 &= -\widetilde{\nabla}_{e_1}N_1 = -\widetilde{\nabla}^{\Q_{c_{1}}}_{\psi'}n_{\psi} = \kappa_{j}\psi' = \kappa_{j}e_1, \\
Ae_2 &= -\widetilde{\nabla}_{e_2}N_1 = 0, \\
Ae_3 &= -\widetilde{\nabla}_{e_3}N_1 = 0.
\end{align*}
Therefore, the curvature $\kappa_j$ of $\mathcal{C}^1$ is a principal curvature of $\Sigma$, which implies that $\kappa_j$ is constant. The case where $N=(0,N_2)$ is analogous.

In the sequence, we are going to prove that, if $|C|<1$, the only remaining possibility is the case when one $c_i$ is negative. Therefore, in what follows, let us assume that $ C \in (-1,1)$. In this case, as in the proof of Theorem \ref{theo1}, let us consider the frame
\begin{equation*}
B =  \left\{ B_{1} = \frac{X}{\sqrt{1 - C^2}}, B_{2} = \frac{J_{1}N + J_{2}N}{\sqrt{2(1+C)}}, B_{3} = \frac{J_{1}N - J_{2}N}{\sqrt{2(1-C)}} \right\},
\end{equation*}
and the function $f$ given in \eqref{function-f}. Again, taking derivatives in \eqref{function-f} and applying them at $r=0$, we obtain the following relations:
\begin{align}
0 = f'(0) &= 2(A_{2} + A_{3} + A_{4}) - 9h^{2}(0) + 3h'(0) - (\delta_{1} + \delta_{2}), \label{eq:f'(0)1}\\
0 = f''(0) &= 6A_{1} + 6h(0)(A_{2} + A_{3} + A_{4}) - 18h'(0)h(0) + 2\sigma_{11}(\delta_{1} + \delta_{2}) \label{eq:f''(0)1} \\
& \quad + 2\sigma_{22}\delta_{2} + 2\sigma_{33}\delta_{1} + 3h''(0), \nonumber
\end{align}
where the functions $A_i$, $i=1,\dots,4$, are given in \eqref{eq:Ai}.

Let us recall that as $C$ is constant we have $\sigma_{1i}=\sigma_{i1}=0$, which imply that $A_{1} = A_{2} = A_{3} = 0$. Moreover, since $h(0)$ is the mean curvature of $\Sigma$, we also conclude that
\begin{equation}\label{eq:h(0)}
3h(0) = \sigma_{22} + \sigma_{33}.
\end{equation}
Thus, we can rewrite \eqref{eq:f'(0)1} and \eqref{eq:f''(0)1} as follows:
\begin{align}
0 &= 2(\sigma_{22}\sigma_{33} - \sigma^{2}_{23}) - 9h^{2}(0) + 3h'(0) - (\delta_{1} + \delta_{2}), \label{rewrited-eq:f'(0)2}\\
0 &= 6h(0)(\sigma_{22}\sigma_{33} - \sigma^{2}_{23}) - 18h'(0)h(0) + 2\sigma_{22}\delta_{2} + 2\sigma_{33}\delta_{1} + 3h''(0). \label{rewrited-eq:f''(0)2}
\end{align}
Combining \eqref{eq:h(0)}, \eqref{rewrited-eq:f'(0)2} and \eqref{rewrited-eq:f''(0)2}, we have that
\begin{equation*}
2\sigma_{33}(\delta_{1} - \delta_{2}) + 3h(0)(\delta_{1} + \delta_{2}) + 6h(0)\delta_{2} + 27h^{3}(0) - 27h'(0)h(0) + 3h''(0) = 0.
\end{equation*}
Note that $(\delta_{1} - \delta_{2}) = \frac{1}{2}(c_1 - c_2 + C(c_1 + c_2)) \neq 0$, since $C \in (-1,1)$ and $c_1 \neq c_2$. Therefore $\sigma_{33}$ is constant and hence, from \eqref{eq:h(0)} and \eqref{rewrited-eq:f'(0)2}, we have that $\sigma_{22}$ and $\sigma_{23}$ are also constant.

On the other hand, we are going to use Codazzi equation to compute $X(\sigma_{22})$, $X(\sigma_{23})$ and $X(\sigma_{33})$. As each $J_{i}$ is parallel and $A(X) = 0$ (since $\nabla C = -2A(X)$), we have $\nabla_{X}B_{j} = 0$ for all $j = 1,2,3$. In this way, since
\begin{equation*}
X(\sigma_{ij}) = X(A(B_{i},B_{j})) = \nabla A(X,B_{i},B_{j})
\end{equation*}
it follows from the Codazzi equation that
\begin{align*}
X(\sigma_{22}) &= \nabla A(B_{2},X,B_{2}) + \frac{c_{1}}{4}\{\langle X,X \rangle \langle PB_{2} + B_{2},B_{2}\rangle -\langle B_{2},X \rangle \langle PX + X,B_{2}\rangle\} \\
& \quad + \frac{c_{2}}{4}\{\langle X,X \rangle \langle PB_{2} - B_{2},B_{2}\rangle -\langle B_{2},X \rangle \langle PX - X,B_{2}\rangle\} \\
&= -C\langle AB_{2},AB_{2}\rangle + \langle PAB_{2},AB_{2}\rangle + \frac{c_{1}\lvert X \rvert^2}{2} \\
&= \frac{c_{1}(1 - C^2)}{2} + (1 - C)\sigma_{22}^2 - (1 + C)\sigma_{23}^2, 
\end{align*}
\begin{align*}
X(\sigma_{23}) &= \nabla A(B_{2},X,B_{3}) + \frac{c_{1}}{4}\{\langle X,X \rangle \langle PB_{2} + B_{2},B_{3}\rangle -\langle B_{2},X \rangle \langle PX + X,B_{3}\rangle\} \\
& \quad + \frac{c_{2}}{4}\{\langle X,X \rangle \langle PB_{2} - B_{2},B_{3}\rangle -\langle B_{2},X \rangle \langle PX - X,B_{3}\rangle\} \\
&= -C\langle AB_{2},AB_{3}\rangle + \langle PAB_{2},AB_{3}\rangle \\
&= (1 - C)\sigma_{22}\sigma_{23} - (1 + C)\sigma_{23}\sigma_{33}, 
\end{align*}
\begin{align*}
X(\sigma_{33}) &= \nabla A(B_{3},X,B_{3}) + \frac{c_{1}}{4}\{\langle X,X \rangle \langle PB_{3} + B_{3},B_{3}\rangle -\langle B_{3},X \rangle \langle PX + X,B_{3}\rangle\} \\
& \quad + \frac{c_{2}}{4}\{\langle X,X \rangle \langle PB_{3} - B_{3},B_{3}\rangle -\langle B_{3},X \rangle \langle PX - X,B_{3}\rangle\} \\
&= -C\langle AB_{3},AB_{3}\rangle + \langle PAB_{3},AB_{3}\rangle - \frac{c_{2}\lvert X \rvert^2}{2} \\
&= \frac{c_{2}(C^2 - 1)}{2} + (1 - C)\sigma_{23}^2 - (1 + C)\sigma_{33}^2.
\end{align*}
Therefore,
\begin{align}
\frac{c_{1}(1 - C^2)}{2} + (1 - C)\sigma_{22}^2 - (1 + C)\sigma_{23}^2 &= 0,\label{eq:X(sigma_{22})=0} \\
\frac{c_{2}(C^2 - 1)}{2} + (1 - C)\sigma_{23}^2 - (1 + C)\sigma_{33}^2 &= 0,\label{eq:X(sigma_{33})=0} \\
(1 - C)\sigma_{22}\sigma_{23} - (1 + C)\sigma_{23}\sigma_{33} &= 0.\label{eq:X(sigma_{23})=0}
\end{align}

Let us show that $\sigma_{23}=0$. Suppose by contradiction that $\sigma_{23} \neq 0$. From \eqref{eq:X(sigma_{23})=0}, we have
\begin{equation}\label{eq3.20}
(1 - C)^2\sigma_{22}^2 - (1 + C)^2\sigma_{33}^2 = 0.
\end{equation}
Now, multiplying \eqref{eq:X(sigma_{22})=0} by $1 - C$ and \eqref{eq:X(sigma_{33})=0} by $1 + C$, we have
\begin{align}
\frac{c_{1}(1 - C)(1 - C^2)}{2} + (1 - C)^2\sigma_{22}^2 - (1 - C^2)\sigma_{23}^2 &= 0, \label{eq3.21} \\
\frac{c_{2}(1 + C)(C^2 - 1)}{2} + (1 - C^2)\sigma_{23}^2 - (1 + C)^2\sigma_{33}^2 &= 0. \label{eq3.22}
\end{align}
Adding \eqref{eq3.21} to \eqref{eq3.22} and using \eqref{eq3.20}, we get
\begin{equation}
c_{1}(1 - C) = c_{2}(1 + C),
\end{equation}
Since $C \in (-1,1)$ and $c_{1} \neq c_{2}$, we have a contradiction. Therefore $\sigma_{23} = 0$. 

If $\sigma_{23} = 0$, the system given by equations \eqref{eq:X(sigma_{22})=0}, \eqref{eq:X(sigma_{33})=0} and \eqref{eq:X(sigma_{23})=0} is reduced to
\begin{equation}
\sigma_{22}^2 = -\frac{c_{1}(1 + C)}{2}, \quad \sigma_{33}^2 = -\frac{c_{2}(1 - C)}{2}. \label{eq:reduced-system}
\end{equation}

Observe that the only possibility of solving \eqref{eq:reduced-system} is to consider that one $c_{i}$ is negative and the other is zero. Then, without loss of generality, let us assume from now on that $c_{1} = -1$ and $c_{2} = 0$. Thus, the previous computation shows us that $\sigma_{ij}=0$, for $i \neq j$ and $\sigma_{11}=\sigma_{33}=0.$ Therefore, we conclude that $\{B_{1},B_{2},B_{3}\}$ must be a frame of principal directions of $\Sigma$, with principal curvatures
\begin{align*}
\mu_{1} = 0, \quad \mu_{2} = \pm \sqrt{\frac{1 + C}{2}}, \quad \mu_{3} = 0.
\end{align*}

In what follows, we consider the case when $\mu_{2} = \sqrt{\frac{1 + C}{2}}$. The shape operator $A$ and the tangential component of the product structure $P^T$ are given, with respect to the frame $B$, respectively by
\begin{align*}
A =
\left(
\begin{array}{ccc}
0 & 0 & 0 \\
0 & \sigma_{22} & 0 \\
0 & 0 & 0
\end{array}
\right),\quad
P^T =
\left(
\begin{array}{ccc}
-C & 0 & 0 \\
0 & 1 & 0 \\
0 & 0 & -1
\end{array}
\right).
\end{align*}

Since $P$ and $J_i$ are parallel, we have that the Levi-Civita connection $\widetilde{\nabla}$ of $\Hi^2\times\R^2$ is given by
\begin{align*}
\begin{array}{llll}
& \widetilde{\nabla}_{B_1}B_i = 0,  & \widetilde{\nabla}_{B_2}B_3 = 0, & \widetilde{\nabla}_{B_3}B_2 = 0, \\
& \widetilde{\nabla}_{B_2}B_1 = - \sqrt{\frac{1 - C}{2}}B_{2}, & \widetilde{\nabla}_{B_2}B_2 = \frac{PN + N}{\sqrt{2(1 + C)}}, & \widetilde{\nabla}_{B_3}B_1 = 0, \\
&\widetilde{\nabla}_{B_3}B_3 = 0.
\end{array}
\end{align*}
Note that $[B_1,B_3] = [B_2,B_3] = 0$. Now, let $\lambda$ a function such that
\begin{equation*}
B_1(\lambda) = -\lambda\sqrt{\frac{1 - C}{2}}, \quad B_2(\lambda) = 0 \quad \text{and} \quad B_3(\lambda) = 0.
\end{equation*}
In this way, we have 
\begin{align}\label{eq3.24}
[B_1,\lambda B_2] = \biggr(B_{1}(\lambda) + \lambda\sqrt{\frac{1 - C}{2}}\biggr)B_{2} = 0
\end{align}
and $[\lambda B_2, B_3 ] = 0$. Therefore, there is a parametrization $\Psi: \Omega \subset \R^3 \longrightarrow \Sigma$, where $\Omega$ is an open subset of $\mathbb{R}^3$ with coordinates $(t,\,u,\,v)$, such that
\begin{align*}
\Psi_{t} = B_{1}, \quad \Psi_{u} = \lambda B_{2} \quad \text{and} \quad \Psi_{v} = B_{3}.
\end{align*}
Now, we are going to construct the parametrization $\Psi$. Since $\Psi_{v} = B_{3}$, $B_{3}$ has no component in $\Hi^2$, and $\widetilde{\nabla}_{B_3}B_3 = 0$, i.e., $B_3$ is a  geodesic field of $\Hi^2 \times \R^2$, when we integrate it with respect to $v$, we have
\begin{equation*}
\Psi = \biggr(\Psi^{\Hi^2}(t,u), \beta(t,u) + B_{3}v\biggr),
\end{equation*}
where $\Psi^{\Hi^2}$ is the component of $\Psi$ in $\Hi^2$. 

Before integrating with respect to the variable $u$, we first observe that $B_2$ has no component in $\R^2$ and 
$$ 
\widetilde{\nabla}_{B_2}B_2 = \nabla^{\Hi^2}_{B_2^{\Hi^2}}B_2^{\Hi^2} = \frac{PN + N}{\sqrt{2(1 + C)}}.
$$
Therefore,
\begin{align*}
\langle \nabla^{\Hi^2}_{B_2^{\Hi^2}}B_2^{\Hi^2}, \nabla^{\Hi^2}_{B_2^{\Hi^2}}B_2^{\Hi^2} \rangle &= \frac{1}{2(1 + C)}\biggr(2\langle PN,N \rangle + \langle PN,PN \rangle + \langle N,N \rangle\biggr)\\
&=\frac{1}{2(1 + C)}(2C +2) = 1,
\end{align*}
that is, if $\varphi$ is a curve parametrized by arc length, with $\varphi' = B_2^{\Hi^2}$, then the geodesic curvature $k_g$ of $\varphi$ is $k_g = 1$, and hence $\varphi$ is a horocycle. Up to rigid motions, $\varphi$ is given by
\begin{align*}
\varphi(u) = \biggr( 1 + \frac{u^2}{2}, u, -\frac{u^2}{2}\biggr).
\end{align*}

As $\Psi_{u} = \biggr(\Psi_{u}^{\Hi^2}, \beta_{u}\biggr)= \lambda B_{2} $, it follows that $\beta$ does not depend on $u$. Thus, $\Psi_{u}^{\Hi^2} = \lambda B_{2}= \lambda(t) \left( u,\,1,\,-u,\, 0,\,0 \right)$, once $B_2(\lambda)=B_3(\lambda)=0.$ When we integrate $\Psi_{u}^{\Hi^2}$ with respect to $u$, we have 
\begin{equation*}
\Psi^{\Hi^2}(t,u) = \lambda(t)\biggr(\frac{u^2}{2},u,-\frac{u^2}{2}\biggr) + \Lambda(t),
\end{equation*}
where $\Lambda(t)$ is a smooth curve in $\Hi^2$. Hence,
\begin{equation}\label{parametrization}
\Psi(t,u,v) = \biggr(\lambda(t)\alpha(u) + \Lambda(t), \beta(t) + B_{3}v\biggr),
\end{equation}
with $\alpha(u) = \big(\frac{u^2}{2},u,-\frac{u^2}{2}\big)$. 

Finally, we integrate $B_1 = \Psi_t = \biggr(\lambda'(t)\alpha(u) + \Lambda'(t), \beta'(t)\biggr)$. Since $\widetilde{\nabla}_{B_1}B_1 = 0$, $B_1$ is also a  geodesic field of $\Hi^2 \times \R^2$. Therefore, $\beta(t) = p_{0} + V_0t$. Considering $\gamma(t) = \lambda(t)\alpha(u) + \Lambda(t)$, we have $\Psi_{t} = \biggr(\gamma'(t), V_0\biggr) = B_{1}$, with $V_0=B_{1}^{\R^2}$. It follows by the definition of $B_1$ that $\lVert B_{1}^{\R^2} \rVert = \sqrt{\frac{1 + C}{2}}$. As $\lVert \gamma' \rVert^2 + \lVert B_{1}^{\R^2} \rVert^2 = 1 $, we get $\lVert \gamma' \rVert = \lVert B_{1}^{\Hi^2} \rVert = \sqrt{\frac{1 - C}{2}} $.

Note that
\begin{align*}
\frac{D\gamma'}{dt} &= \frac{d\gamma'}{dt} - \lVert B_{1}^{\Hi^2} \rVert^2\gamma \\
&=\alpha(u)\biggr(\lambda''(t) - \lVert B_{1}^{\Hi^2} \rVert^2\lambda(t)\biggr) + \Lambda''(t) - \lVert B_{1}^{\Hi^2} \rVert^2\Lambda(t).
\end{align*}
Since $\gamma$ is a geodesic in $\Hi^2$, we have that
\begin{align*}
\lambda''(t) - \lVert B_{1}^{\Hi^2} \rVert^2\lambda(t) = 0 \quad \text{and} \quad \Lambda''(t) - \lVert B_{1}^{\Hi^2} \rVert^2\Lambda(t) = 0,
\end{align*}
and hence $\lambda(t)$ and $\Lambda(t)$ are given by
\begin{align*}
\lambda(t) &= b_{1}\cosh(r t) + b_{2}\sinh(r t), \\
\Lambda(t) &= V_{1}\cosh(r t) + V_{2}\sinh(r t),
\end{align*}
where $r = \pm \lVert B_{1}^{\Hi^2} \rVert$, $b_{i}$ are real constants and $V_{i}$ orthonormal vectors. If $\Lambda = (\Lambda_{1}, \Lambda_{2}, \Lambda_{3})$, using $\langle \gamma,\gamma\rangle = -1$, we obtain the following polynomial equation in $u$:
$$ (\lambda - (\Lambda_{1} + \Lambda_{3}))u^2 + 2\Lambda_{2}u = 0, $$
that is,
$$\lambda - (\Lambda_{1} + \Lambda_{3}) = 0 \quad \text{and} \quad \Lambda_{2}=0.$$
Therefore, if $V_{1} = (v_{11},v_{12},v_{13})$ and $V_{2} = (v_{21},v_{22},v_{23})$, we have $v_{12}=v_{22} = 0$, $b_{1} = v_{11} + v_{13}$ and $b_{1} = v_{21} + v_{23}$. Now, writing $V_{1} = (\cosh(a_{1}),0,\sinh(a_{1}))$ and $V_{2} = (\sinh(a_{1}),0,\cosh(a_{1}))$, we get $b_{1}=b_{2}=e^{a_1}$. Thus, we conclude that
\begin{align*}
\begin{split}
\lambda(t) &= e^{r t}, \\
\Lambda(t) &= \biggr(\cosh(r t),0,\sinh(r t)\biggr).
\end{split}
\end{align*}

From \eqref{eq3.24}, it follows that 
$$r + \sqrt{\dfrac{1 - C}{2}} = 0.$$
Thus, we obtain that $r = -\lVert B_{1}^{\Hi^2} \rVert$, and therefore 
\begin{align}
\begin{split}
\lambda(t) &= e^{-\lVert B_{1}^{\Hi^2} \rVert t}, \\
\Lambda(t) &= \biggr(\cosh(-\lVert B_{1}^{\Hi^2} \rVert t),0,\sinh(-\lVert B_{1}^{\Hi^2} \rVert t)\biggr).
\end{split}\label{eq3.26}
\end{align}
Writing $b = ||B^{\Hi^2}_{1}||=\sqrt{1-||B^{\R^2}_{1}||^2} = \sqrt{1-||V_0||^2}$ and $W_0 = B_{3}$, when we replace \eqref{eq3.26} in \eqref{parametrization}, we obtain the parametrization \eqref{eq:parametrization-Psi}.

For the converse, suppose that $\Sigma$ is parametrized by \eqref{eq:parametrization-Psi}. Since
\begin{align*}
\Psi_{t} &= -b \biggr( e^{-b t}(\alpha(u),\vec{0})+ \Big(\sinh(-b t), 0, \cosh(-b t), -\frac{V_0}{b} \Big)\biggr),\\
\Psi_{u} &= e^{-b t}(\alpha'(u),\vec{0}), \\
\Psi_{v} &= \Big(\vec{0},W_0 \Big),
\end{align*}
we conclude that a unit normal vector field $N$ to $\Sigma$ is given by
\begin{equation*}
N = -\lVert V_0\rVert \biggr( e^{-b t}(\alpha(u),\vec{0})+ \Big(\sinh(-b t), 0, \cosh(-b t)\Big), \frac{b}{\lVert V_0\rVert^2}V_0 \biggr).
\end{equation*}

Denoting by $\Tilde{D}$ the covariant derivative in $\mathbb{L}^3$, we obtain
\begin{align*}
\Tilde{D}_{\Psi_{t}}N &= b\lVert V_0\rVert \biggr( e^{-b t}\alpha(u) + \Big(\cosh(-b t), 0, \sinh(-b t) \Big), \vec{0} \biggr)\\
&= b\lVert V_0\rVert \Psi^{\Hi^2},\\
\Tilde{D}_{\Psi_{u}}N &= -\lVert V_0\rVert e^{-b t}(\alpha'(u),\vec{0})\\
&=-\lVert V_0\rVert \Psi_{u},\\
\Tilde{D}_{\Psi_{v}}N &= 0.
\end{align*}
It follows immediately from the derivatives above and the parametrization $\Psi$ that $$\langle \Tilde{D}_{\Psi_{u}}N,\Psi^{\Hi^2} \rangle = \langle \Tilde{D}_{\Psi_{v}}N,\Psi^{\Hi^2} \rangle = 0 \quad \text{and}\quad \langle \Tilde{D}_{\Psi_{t}}N,\Psi^{\Hi^2} \rangle = -b\lVert V_0\rVert.$$
Therefore, since $\widetilde{\nabla}_{V}W = \Tilde{D}_{V}W + \langle \Tilde{D}_{V}W, \Psi^{\Hi^2}\rangle \Psi^{\Hi^2},$ we get
\begin{align*}
\widetilde{\nabla}_{\Psi_{t}}N &= 0,\\
\widetilde{\nabla}_{\Psi_{u}}N &= -\lVert V_0\rVert \Psi_{u},\\
\widetilde{\nabla}_{\Psi_{v}}N &= 0,
\end{align*}
that is, $\Sigma$ has principal curvatures $\mu_{1} = 0, \, \mu_{2} =  \lVert V_0\rVert$ and $\mu_{3} = 0.$ Finally, since
\begin{equation*}
PN = -\lVert V_0\rVert \biggr( e^{-b t}(\alpha(u),\vec{0})+ \Big(\sinh(-b t), 0, \cosh(-b t)\Big), -\frac{b}{\lVert V_0\rVert^2}V_0 \biggr)
\end{equation*}
and $b=\sqrt{1-||V_0||^2}$, it follows that
\begin{align*}
C &= \langle PN,N \rangle \\
&= \lVert V_0\rVert^2\biggr( e^{-2b t}u^2 + e^{-b t}\Big(-\dfrac{u^2}{2}\sinh(-bt) - \dfrac{u^2}{2}\cosh(-bt)\Big) \\
& \quad \quad \quad \quad -\sinh^2(-bt) + \cosh^2(-bt) - \dfrac{b^2}{\lVert V_0\rVert^2}\biggr) \\
&= \lVert V_0\rVert^2\biggr( 1 - \dfrac{b^2}{\lVert V_0\rVert^2}\biggr)\\
&=2\lVert V_0\rVert^2 - 1,
\end{align*}
that is, $\lVert V_0\rVert = \sqrt{\dfrac{1 + C}{2}}$.
Thus, we conclude the proof of the theorem.
\end{proof}

\begin{remark} \label{rmk:geometry-psi}
Following the notation established in the proof of Theorem \ref{theo2}, let us provide a geometric description of the hypersurface given by the parametrization $\Psi$. Note that a unit normal vector to the horocycle
\begin{align*}
\varphi(u) = \biggr( 1 + \frac{u^2}{2}, u, -\frac{u^2}{2}\biggr),
\end{align*}
is given by
\begin{align*}
n(u) = \biggr( \frac{u^2}{2}, u, 1 -\frac{u^2}{2}\biggr).
\end{align*}
Fixing $u, \, v \in \mathbb{R}$, let us consider in $\Hi^2~\times~\R^2$ the following geodesic parametrized by arc length
\begin{align*}
\gamma(t) = \Big(\cosh(\omega t)\varphi(u) + \sinh(\omega t)n(u), g(v) + V_{0}t\Big),
\end{align*}
where $g(v)=p_{0} + W_{0}v$ is a geodesic in $\R^2$ with normal vector $V_{0}$. Since
\begin{align*}
\gamma'(t) = \Big(\omega\sinh(\omega t)\varphi(u) + \omega\cosh(\omega t)n(u), V_{0}\Big),
\end{align*}
it follows that
\begin{equation*}
1 = ||\gamma'(t)||^2 = \omega^2 + ||V_{0}||^2,
\end{equation*}
which implies $\omega = \pm \sqrt{1 - ||V_{0}||^2} = \pm b$. Considering $\omega = -b$, we get
\begin{align*}
\gamma(t) &= e^{-b\, t}(\alpha(u),\vec{0})+ \Big(\cosh(-b\, t), 0, \sinh(-b\, t), V_0t \Big) \\
& \quad + \Big(\vec{0}, p_0 + W_0 v \Big).
\end{align*}

Varying the parameters $(t,u,v) \in \mathbb{R}^3$, the construction above provides exactly the parametrization $\Psi$. Therefore, the hypersurface $\Psi(\mathbb{R}^3$) is a family of geodesically parallel surfaces of $\Hi^2~\times~\R^2$, given by products of horocycles in $\mathbb{H}^2$ and straight lines in $\mathbb{R}^2$.

\end{remark}

\bibliographystyle{abbrv}
\bibliography{refs}

\end{document}